\newcommand\be{\begin{equation}}
\newcommand\ee{\end{equation}}
\newcommand\bea{\begin{eqnarray}}
\newcommand\eea{\end{eqnarray}}
\newcommand\bi{\begin{itemize}}
\newcommand\ei{\end{itemize}}
\newcommand\ben{\begin{enumerate}}
\newcommand\een{\end{enumerate}}
\newcommand\bc{\begin{center}}
\newcommand\ec{\end{center}}
\newcommand\ba{\begin{array}}
\newcommand\ea{\end{array}}
\newtheorem{thm}{Theorem}[section]
\newtheorem{conj}[thm]{Conjecture}
\newtheorem{lem}[thm]{Lemma}
\theoremstyle{definition}
\newtheorem{rek}[thm]{Remark}
\begin{document}

\title{On the possible orders of a basis for a finite cyclic group}

\author{Peter Hegarty}
\email{hegarty@chalmers.se} \address{Mathematical Sciences,
Chalmers University Of Technology and University of Gothenburg,
41296 Gothenburg, Sweden}

\subjclass[2000]{11B13, 11B75 (primary), 05C20 (secondary).} \keywords{Additive basis, cyclic group, small doubling.}

\date{\today}

\begin{abstract} We prove a conjecture of Dukes and Herke concerning the 
possible orders of a basis for the cyclic group $\mathbb{Z}_n$, namely : For
each $k \in \mathbb{N}$ there exists a constant $c_k > 0$ such that, for all
$n \in \mathbb{N}$, if $A \subseteq \mathbb{Z}_n$ is a basis of order 
greater than $n/k$, then the order of $A$ is within $c_k$ of $n/l$ for some 
integer $l \in [1,k]$. The proof makes use of 
various results in additive number 
theory concerning the growth of sumsets.  
\end{abstract}


\maketitle

\setcounter{equation}{0}

\setcounter{equation}{0}

\section{Introduction}

Let $G$ be an abelian group, written additively, and $A$ a subset of $G$. 
For a positive integer $h$ we denote by $hA$ the subset of $G$ consisting
of all possible sums of $h$ not necessarily distinct element of $A$, i.e.:
\be
hA = \{a_1 + \cdots + a_h : a_i \in A \}.
\ee
This set is called the {\em $h$-fold sumset} of $A$. We say that $A$ is a 
{\em basis} for $G$ if $hA = G$ for
some $h \in \mathbb{N}$. Define the function
$\rho : 2^{G} \rightarrow \mathbb{N} \cup \{\infty\}$ as follows :
\be
\rho (A) := \left\{ \begin{array}{lr} \min \{h : hA = G\}, & {\hbox{if $A$
is a basis for $G$}}, \\ \infty, & {\hbox{otherwise}}. \end{array} \right.
\ee
In the case where $\rho (A) < \infty$, this invariant is usually referred to 
as the {\em order}{\footnote{In \cite{DH} the term {\em exponent} is
used, while in \cite{KL} the term {\em positive diameter} appears. These 
authors also employ different notations.}} 
of the basis $A$. 
\\
\\
Now let us specialise to the case $G = \mathbb{Z}_n$, a finite cyclic group. 
Throughout this paper we will write $\rho_{n}(A)$ when referring to
a subset $A$ of $\mathbb{Z}_n$. Clearly a subset $A \subseteq \mathbb{Z}_n$ 
is a basis if and only if the greatest common
divisor of its elements is relatively prime to $n$. Also, it is easy to see 
that, if $\rho_{n}(A) < \infty$ then $\rho_{n}(A) \leq n-1$, 
with equality if and only if
$A = \{a_1,a_2\}$ is a 2-element set with GCD$(a_2-a_1,n) = 1$. Hence the 
range of the function $\rho_{n}$ is contained inside $[1,n-1] \cup \{\infty\}$. 
It has been known for some time that, for large enough $n$, the range of 
$\rho_{n}$
does not contain the entire interval of integers $[1,n-1]$. In a recent 
paper, which also contains a summary of previously known results, Dukes 
and Herke \cite{DH} make a conjecture regarding gaps in the range of $\rho_{n}$.
Their hypothesis may be stated precisely as follows :

\begin{conj}
For each $k \in \mathbb{N}$ there exists an absolute constant $c_k > 0$
such that the following holds :
\par For any $n \in \mathbb{N}$, if $A$ is a basis for $\mathbb{Z}_n$ for
which $\rho_{n}(A) \geq n/k$, then there is some integer $l \in [1,k]$ such that
$|\rho_{n} (A) - n/l | < c_k$.
\end{conj}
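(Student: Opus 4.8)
The plan is to understand first what a basis $A$ with $\rho_n(A)$ close to $n/l$ must look like, and then to show that these are essentially the only large values that can occur. The heuristic is that if $h A = \Z_n$ with $h$ large, then the partial sumsets $A, 2A, 3A, \dots$ must grow extremely slowly --- on average by little more than one new element per step --- all the way up to step $h$. By Freiman-type / Kneser-type results on the growth of sumsets, such slow growth forces strong structure on $A$: roughly, $A$ must be contained in (a small perturbation of) an arithmetic progression, equivalently, after rescaling by a unit, $A$ must lie in a short interval $\{0,1,\dots,m\}$ together with possibly a bounded number of exceptional elements, and $m$ must be of size about $n/h$. If $A \subseteq \{0,1,\dots,m\}$ exactly and $0, 1 \in A$, then $\rho_n(A)$ is essentially $n/m$ up to an additive constant; more generally the exceptional elements and the precise "covering" behaviour of $A$ inside the interval can only shift $\rho_n(A)$ by $O_k(1)$. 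So the integer $l$ will be recovered as (roughly) $n$ divided by the length of the progression structure underlying $A$.

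First I would set $h = \rho_n(A)$ and fix the regime $h \geq n/k$, so $m := \lceil n/h \rceil \leq k$ is the target "width". I would dilate $A$ by a unit of $\Z_n$ so that its structure is visible on the interval $[0,n)$, and analyse the sequence $|jA|$ for $j = 1, \dots, h$. Since $|hA| = n$ and each step adds at least one element once $jA \neq (j+1)A$ stabilises (and it cannot stabilise before reaching all of $\Z_n$ since $A$ is a basis), one gets that $|jA| \leq |A| + (j-1)\cdot(\text{small})$; pushing this, the average per-step growth over the whole range is at most about $m$, and in fact for the order to be as large as $n/k$ one needs the growth to be close to the minimum possible. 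I would then invoke the structural input (Freiman's $3k-4$ theorem in $\Z$, its cyclic-group analogues, and Kneser's theorem to handle the possibility that $A$ generates a proper-looking coset structure) to conclude that $A$, up to dilation and translation, is contained in $\{0,1,\dots,m'\}$ for some $m' = m + O_k(1)$, with at most $O_k(1)$ elements removed, and with $\{0, m'\}$ (or close to it) present so that the progression really has that diameter.

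Granting that structural normal form, the computation of $\rho_n(A)$ is then a matter of the "postage stamp"/covering problem inside a bounded-width interval: if $A \subseteq \{0,\dots,m'\}$ with $\gcd$ condition satisfied, then $jA \subseteq \{0,\dots,jm'\}$, so we certainly need $jm' \geq n-1$, i.e.\ $j \geq (n-1)/m'$; conversely once $j$ is a bounded amount larger than this, $jA$ covers everything, because adding copies of a fixed bounded-width basis eventually fills every residue and the "filling time" past the trivial bound is $O_k(1)$. Hence $\rho_n(A) = \lceil (n-1)/m' \rceil + O_k(1)$, and since $m' = m + O_k(1)$ with $1 \leq m \leq k$, we get $\rho_n(A) = n/l + O_k(1)$ where $l$ is the integer nearest to $n/m'$ (one checks $l \in [1,k]$ because $\rho_n(A) \geq n/k$ forces $l \leq k$). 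Absorbing all the $O_k(1)$ terms into a single constant $c_k$ finishes the argument.

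The main obstacle I anticipate is the structural step: going from "$jA$ grows slowly for all $j$ up to $h$" to "$A$ is a bounded perturbation of an arithmetic progression of the right length" is delicate in $\Z_n$, because of (i) the wraparound, which means one cannot directly apply the $\Z$-theorems, and (ii) the need for uniformity --- the perturbation bound must depend only on $k$, not on $n$ or on $|A|$. Handling small $|A|$ (where $A$ could be a couple of elements and Freiman-type theorems are vacuous) versus large $|A|$ may require separate treatment, and one must be careful that the "bounded number of exceptional elements" really is bounded: a single badly-placed element of $A$ could a priori cause $jA$ to jump, so controlling how exceptional elements interact with the progression over $h \sim n/k$ steps is the crux. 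I expect to need the hypothesis $h$ \emph{large} in an essential way here --- it is precisely the length of the chain $A \subseteq 2A \subseteq \cdots \subseteq hA$ that forces rigidity --- and to combine it with a Plünnecke--Ruzsa or Kneser bound to pin down the structure quantitatively.
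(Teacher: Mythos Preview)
Your overall strategy --- deduce that $A$ (after dilation) sits in a short arithmetic progression, then compute $\rho_n(A)$ for such sets via a postage-stamp argument --- is indeed the paper's strategy, and your endgame computation is essentially right. But the structural step, which you correctly flag as the crux, has a real gap in your outline, and the tools you name are not the ones that close it.

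Two things are missing. First, you never establish that $|A|$ itself is bounded in terms of $k$ alone. The paper invokes a theorem of Klopsch--Lev: if $\rho_n(A) \geq n/k$ then $|A| \leq 2k$. This is the essential starting point; without it your dichotomy between ``small $|A|$'' and ``large $|A|$'' is moot, and your average-growth heuristic (each step $jA \to (j+1)A$ adds at most about $k$ elements on average) does not by itself produce a small-doubling hypothesis on any particular set. Second, even granting $|A| \leq 2k$, the ratio $|2A|/|A|$ can be of order $k$, far too large for any Freiman-type structure theorem in $\mathbb{Z}_n$. The paper's device is to iterate: applying Klopsch--Lev to $2^jA$ gives $|2^jA| \leq 2^{j+1}k$, so the sequence $|A|, |2A|, |4A|, \dots$ cannot grow by a factor $\geq 2.04$ at every step for more than $O(\log k)$ steps. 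Hence for some $j = O_k(1)$ the set $B = 2^jA$ satisfies $|2B| < 2.04\,|B|$, and \emph{now} one applies the Deshouillers--Freiman structure theorem for subsets of $\mathbb{Z}_n$ with doubling below $2.04$. This is the ``cyclic-group analogue'' you gesture at, but it requires that specific threshold; Freiman's $3k-4$ theorem is an integer result, Kneser alone gives coset structure but not an arithmetic progression, and Pl\"unnecke--Ruzsa does not run in the direction you need (it bounds $|hA|$ in terms of $|2A|$, not the reverse).

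Once the structure is in hand --- $B$ lies in a short progression of cosets of a subgroup $H$ of bounded order --- the paper quotients by $H$ and finishes much as you describe, using a theorem of Freiman and Lev on the growth of iterated sumsets of dense subsets of integer intervals; a separate elementary lemma handles the case where the quotient image has at most three elements. So your finish is sound; it is the bridge to it that needs the Klopsch--Lev bound and the iterated-doubling trick.
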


Observe that the conjecture would imply the existence of arbitrarily long gaps 
in the range of $\rho_{n}$, for all sufficiently large $n$. The purpose of our
note is to prove this conjecture, using results from additive number theory
concerning the structure of sets with small doubling. The idea is roughly
as follows : On the one hand, the conjecture says something about the possible
orders of a basis for $\mathbb{Z}_n$ when that order is large, namely of 
order $n$. On the other hand, various results from additive number theory
imply that if $A$ is a basis for $\mathbb{Z}_n$, then the iterated sumsets
$hA$ cannot grow in size $\lq$too slowly' and, if the growth rate is
close to the slowest possible, then $A$ has a very restricted structure. 
Putting these two things together allows us to describe closely the
structure of (a small multiple of) a basis $A$ of large order, and from there 
we can establish the conjecture.        
   
\setcounter{equation}{0}
\section{Preliminaries}

Here we state three results from the additive number theory literature which 
will be used in our proof of Conjecture 1.1. 

 
The first result is part of Theorem 2.5 of \cite{KL} :


\begin{thm} {\bf (Klopsch-Lev)}
Let $n \in \mathbb{N}$ and $\rho \in [2,n-1]$. Let $A$ be a basis for 
$\mathbb{Z}_n$ such that $\rho_{n}(A) \geq \rho$. Then 
\be
|A| \leq \max \left\{ \frac{n}{d} \left( \lfloor \frac{d-2}{\rho - 1} \rfloor
+ 1 \right) : d | n, \; d \geq \rho + 1 \right\},
\ee
In particular, for each fixed $k \in \mathbb{N}$, if $\rho_{n}(A) \geq n/k$ 
and $n \gg 0$, then $|A| \leq 2k$.
\end{thm}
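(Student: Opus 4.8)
The plan is to derive the bound (2.1) from Kneser's theorem on sumsets in an abelian group, which I regard as the natural ``slow growth'' input alluded to in the introduction. Recall Kneser's inequality in the form I will use: if $B$ is a finite subset of an abelian group and $H = H(hB) = \{g : g + hB = hB\}$ is the stabilizer of the $h$-fold sumset, then $|hB| \geq h\,|B + H| - (h-1)|H|$. The whole proof is an exercise in extracting the right divisor $d \mid n$ from this stabilizer.

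First I would translate so that $0 \in A$. Since $h(A - a_0) = hA - ha_0$, this changes neither the property of being a basis nor the value of $\rho_n$, and it yields the nesting $A \subseteq 2A \subseteq \cdots$. The hypothesis $\rho_n(A) \geq \rho$ then says precisely that $(\rho - 1)A \neq \mathbb{Z}_n$. Next I would apply Kneser with $h = \rho - 1$ to the set $S := (\rho-1)A$, letting $H := H(S)$ be its stabilizer and $d := n/|H|$ its index, so that $d \mid n$. Writing $m := |A + H|/|H|$ for the number of cosets of $H$ meeting $A$, Kneser gives $|S| \geq (\rho-1)m|H| - (\rho-2)|H| = |H|\bigl[(\rho-1)(m-1) + 1\bigr]$. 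On the other hand $S$ is a proper, $H$-periodic subset of $\mathbb{Z}_n$, so it omits at least one full coset and $|S| \leq n - |H|$. Combining the two estimates and dividing through by $|H| = n/d$ yields $(\rho-1)(m-1) \leq d - 2$, whence $m \leq \lfloor (d-2)/(\rho-1) \rfloor + 1$ and therefore $|A| \leq |A + H| = m|H| = \frac{n}{d}\bigl(\lfloor (d-2)/(\rho-1) \rfloor + 1\bigr)$.

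It then remains only to check that this particular $d$ lies in the index set of the maximum, i.e.\ that $d \geq \rho + 1$. Here I would first argue that $m \geq 2$: if $m = 1$ then $A + H$ is a single coset containing $0$, forcing $A \subseteq H$; but a basis generates $\mathbb{Z}_n$, so this would give $H = \mathbb{Z}_n$ and hence $S = \mathbb{Z}_n$, contradicting $(\rho-1)A \neq \mathbb{Z}_n$. With $m \geq 2$ the chain $\rho - 1 \leq (\rho-1)(m-1) \leq d - 2$ gives $d \geq \rho + 1$, so the displayed term is one of those over which the maximum is taken, and (2.1) follows. For the ``in particular'' claim I would set $\rho = \lceil n/k \rceil$ and bound $\frac{n}{d}\bigl(\lfloor (d-2)/(\rho-1)\rfloor + 1\bigr) < \frac{n}{\rho-1} + \frac{n}{d}$; since $d \geq \rho + 1 > n/k$ the second term is $< k$, while $\frac{n}{\rho-1} \leq \frac{nk}{n-k} \to k$, so the right-hand side is $< 2k + 1$ for $n \gg 0$, giving $|A| \leq 2k$.

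The step I expect to be most delicate is the bookkeeping around the stabilizer $H$: one must be careful that the index $d = n/|H|$ it produces is genuinely forced into the admissible range $d \geq \rho + 1$, since the maximum in (2.1) ranges only over divisors $d \geq \rho + 1$ and the estimate would be vacuous otherwise. This is exactly what the $m \geq 2$ argument secures. By contrast, the Kneser step and the final arithmetic are routine once $0 \in A$ has been arranged and $S = (\rho-1)A$ has been identified as a proper periodic set.
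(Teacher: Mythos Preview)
Your argument is correct. The paper itself does not prove this statement; it is quoted from Klopsch--Lev \cite{KL} as a known result and used as a black box, so there is no in-paper proof to compare against. Your derivation via Kneser's theorem is the natural route and recovers exactly the bound (2.1): applying Kneser at level $h=\rho-1$, reading off the stabilizer $H$ with index $d=n/|H|$, and using that $(\rho-1)A$ is a proper $H$-periodic set gives $(\rho-1)(m-1)\le d-2$ and hence both $m\le\lfloor(d-2)/(\rho-1)\rfloor+1$ and $d\ge\rho+1$. The ``in particular'' estimate is also handled cleanly, since your inequality $\frac{n}{d}\bigl(\lfloor(d-2)/(\rho-1)\rfloor+1\bigr)<\frac{n}{\rho-1}+\frac{n}{d}$ holds uniformly for every admissible $d$, so it bounds the maximum and not merely the single term produced by the stabilizer.
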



The second result concerns the structure of subsets of $\mathbb{Z}_n$
with small doubling and is Theorem 1 of \cite{DF} :

\begin{thm} {\bf (Deshouillers-Freiman)}
Let $n \in \mathbb{N}$ and $A$ a non-empty subset of $\mathbb{Z}_n$ such that
$|A| < 10^{-9} n$ and $|2A| < 2.04 |A|$. Then there is a subgroup 
$H \subsetneqq G$ such that one of the following three cases holds :
\par (i) if the number of cosets of $H$ met by $A$, let us call it $s$, is
different from $1$ and $3$, then $A$ is included in an arithmetic progression 
of $l$ cosets modulo $H$ such that
\be (l-1)|H| \leq |2A| - |A|.
\ee
\par (ii) if $A$ meets exactly three cosets of $H$, then (2.2) holds with $l$
replaced by $\min \{l,4\}$.
\par (iii) if $A$ is included in a single coset of $H$, then $|A| > 
10^{-9} |H|$.
\\
\\
Furthermore, when $l \geq 2$, there exists a coset of $H$ which contains
more than $\frac{2}{3} |H|$ elements from $A$, a relation superseded by (2.2)
when $l \geq 4$.
\end{thm}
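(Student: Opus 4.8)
The plan is to read Theorem 2.2 as a Freiman-type structure theorem and prove it in two movements: first use Kneser's theorem to locate the subgroup $H$, then rectify the quotient $\mathbb{Z}_n/H$ into $\Z$ and invoke Freiman's $3k-4$ theorem there. Concretely, let $H := \{g \in \mathbb{Z}_n : g + 2A = 2A\}$ be the stabilizer of the sumset $2A$, and let $s$ denote the number of cosets of $H$ that meet $A$, so that $|A+H| = s|H|$. Kneser's theorem gives $|2A| \geq 2|A+H| - |H| = (2s-1)|H|$, while the hypothesis gives $(2s-1)|H| \leq |2A| < 2.04|A| \leq 2.04\, s|H|$. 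These two inequalities squeeze both the size of $s$ and the "fullness" of $A$ inside the cosets it meets: one extracts $|A| > (2s-1)|H|/2.04$, so on average each of those $s$ cosets is all but a small fraction full. The trichotomy in the statement will come from how large $s$ is together with the two exceptional small values.

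For the main case, pass to $\bar G := \mathbb{Z}_n/H$ and let $\bar A$ be the image of $A$. By the maximality of $H$ the set $2\bar A = \overline{2A}$ is aperiodic in $\bar G$, and the near-fullness just obtained forces $2\bar A$ to satisfy a doubling bound below $3$ as well --- this is the step that genuinely consumes the numerical slack in the constant $2.04$. An aperiodic subset of a cyclic group with doubling below $3$ whose size $s$ is neither $1$ nor $3$ can be rectified, i.e.\ carried by a Freiman isomorphism of order $2$ onto a set $B \subseteq \Z$ with $|B+B| = |2\bar A|$; Freiman's $3k-4$ theorem then places $B$, hence $\bar A$, inside an arithmetic progression of some length $l$ with $l - 1 \leq |2\bar A| - |\bar A|$. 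Pulling this back, $A$ lies in an arithmetic progression of $l$ cosets of $H$. Moreover the $l-2$ interior cosets of that progression are each covered twice over --- by $A + a$ for the two extremal choices of $a \in A$ --- so they lie entirely inside $2A$, which upgrades the count to the clean inequality $(l-1)|H| \leq |2A| - |A|$ asserted in (2.2).

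The excluded values $s = 1$ and $s = 3$ are handled separately. If $s = 1$ then $A$ sits inside a single coset of $H$, and one recurses on $A$ regarded as a subset of $H \cong \mathbb{Z}_{|H|}$; the recursion terminates in case (iii), i.e.\ when the density bound fails and $|A| > 10^{-9}|H|$. If $s = 3$ then $\bar A$ is one of the Vosper/Freiman exceptional configurations in $\bar G$ for which the $3k-4$ estimate must be relaxed, and only the weaker conclusion survives, giving case (ii) with $l$ replaced by $\min\{l,4\}$. Finally, the addendum that some coset of $H$ contains more than $\frac{2}{3}|H|$ elements of $A$ once $l \geq 2$ follows directly from the fullness estimate: if every one of the $s \geq l \geq 2$ cosets met by $A$ contained at most $\frac{2}{3}|H|$ points, then Kneser's lower bound for $|2A|$ would already exceed $2.04|A|$.

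The main obstacle --- and the reason this is a theorem of Deshouillers and Freiman and not a one-line corollary of Kneser --- is the rectification step together with the matching bookkeeping of constants. Proving that an aperiodic small-doubling set in $\mathbb{Z}_n$ with $s \neq 1,3$ really does embed Freiman-isomorphically into $\Z$ requires ruling out further internal subgroup-type obstructions, and the tight value $2.04$ (as opposed to something nearer $3$) is precisely what buys the room to do so while still satisfying the hypothesis of the $3k-4$ theorem in the quotient. In other words one is pushing Kneser's theorem one step into the regime where the stabilizer has already been divided out and is trivial; that step is delicate, whereas everything downstream --- the coset counting and the $\frac{2}{3}$ remark --- is then elementary.
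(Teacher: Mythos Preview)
The paper does not prove this statement: it is quoted as Theorem~1 of \cite{DF} and used as a black box in Section~3. There is no proof in the paper to compare your proposal against.

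Your outline is a faithful high-level summary of the Deshouillers--Freiman strategy --- take $H$ to be the period of $2A$, apply Kneser, pass to the quotient, rectify into $\mathbb{Z}$, and invoke a $3k{-}4$-type bound --- and your final paragraph correctly flags the rectification step as the real content. But that step is not merely ``delicate'': it is essentially the entire 28-page paper \cite{DF}, and your assertion that ``an aperiodic subset of a cyclic group with doubling below $3$ whose size $s$ is neither $1$ nor $3$ can be rectified'' is not a lemma one can simply cite. In a cyclic group of composite order, proper subgroups of $\mathbb{Z}_n/H$ can still obstruct rectification even after $H$ has been quotiented out, and a density hypothesis (this is where the constant $10^{-9}$ re-enters) is needed in addition to the doubling bound. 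So what you have written is a reasonable road map rather than a proof; the gap you yourself name at the end is genuine and is, in fact, the theorem.
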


\begin{rek}
In \cite{DF} the authors remark that they expect that the same
structure theorem holds for larger constants than $2.04$ and $10^{-9}$
respectively. This is known to be the case when $n$ is prime, according to
the so-called {\em Freiman-Vosper theorem}. For a proof of that 
$\lq$classical' result, see Theorem 2.10 in \cite{N}.
\end{rek}

The third and 
last result from the literature that we shall use is a special case of 
a result of Lev \cite{L}, generalising an earlier result of Freiman \cite{F}, 
concerning the growth of sumsets of a large 
subset of an arithmetic progression of integers :

\begin{thm} {\bf (Freiman, Lev)}    
Let $A \subseteq \mathbb{Z}$ satisfy
\be
|A| = n, \;\;\; A \subseteq [0,l], \;\;\; \{0,l\} \subseteq A, \;\;\; 
{\hbox{gcd}}(A) = 1.
\ee
If $2n-3 \geq l$ then, for every $h \in \mathbb{N}$ one has 
\be
|hA| \geq n + (h-1)l.
\ee
\end{thm}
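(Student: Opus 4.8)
The plan is to induct on $h$, after recasting the claim as a bound on the number of integers that $hA$ misses from the interval it lives in. Write $n=|A|$ and set $d:=l+1-n$, the number of points of $[0,l]$ not belonging to $A$; the hypothesis $2n-3\ge l$ is exactly the inequality $d\le n-2$, and, since $hA\subseteq[0,hl]$ with $\{0,hl\}\subseteq hA$, the conclusion $|hA|\ge n+(h-1)l$ is equivalent to the statement $P(h)$ that \emph{$hA$ omits at most $d$ of the $hl+1$ integers in $[0,hl]$}. (For $n\le 2$ everything is trivial: $n=2$ forces $l=1$ and $A=\{0,1\}$; so we may assume $n\ge 3$, whence $l\ge 1$.)

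$P(1)$ is just the definition of $d$. For $P(2)$ I would invoke Freiman's classical $3k-4$ theorem \cite{F}: because $\{0,l\}\subseteq A$ and $\gcd(A)=1$, the shortest arithmetic progression containing $A$ is $[0,l]$, of length $l+1$; so if $|2A|\le 3n-4$ the theorem forces $l+1\le |2A|-n+1$, i.e. $|2A|\ge n+l$, while if $|2A|\ge 3n-3$ then $|2A|\ge 3n-3\ge n+l$ since $l\le 2n-3$. In either case $2A$ omits at most $(2l+1)-(n+l)=d$ points, which is $P(2)$.

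For the inductive step I would prove $P(h)\Rightarrow P(h+1)$ for every $h\ge 2$. Set $B:=hA\subseteq[0,M]$ with $M:=hl\ge 2l$; by $P(h)$ the deficiency set $E:=[0,M]\setminus B$ has $|E|\le d$, and $0,M\in B$. Let $Z$ be the set of integers in $[0,M+l]$ omitted by $B+A$; I claim $|Z|\le |E|$. First, $Z$ contains no point $x\in[l,M]$: for such $x$ every translate $x-a$ with $a\in A$ lies in $[0,M]$, so $x\notin B+A$ would put all $n$ distinct points of $x-A$ into $E$, contradicting $|E|\le d\le n-2$. Hence $Z\subseteq[0,l-1]\cup[M+1,M+l]$. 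Since $0\in A$, the identity map injects $Z\cap[0,l-1]$ into $E\cap[0,l-1]$ (if $x\in B$ then $x=x+0\in B+A$); since $l\in A$, the map $x\mapsto x-l$ injects $Z\cap[M+1,M+l]$ into $E\cap[M-l+1,M]$. As $M\ge 2l$ these two target windows are disjoint, so $|Z|\le |E\cap[0,l-1]|+|E\cap[M-l+1,M]|\le |E|\le d$, which is $P(h+1)$. Combined with $P(1)$ and $P(2)$, this gives $P(h)$, hence the theorem, for all $h$.

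The inductive step is where the care is needed. The naive version, using only the translates $B+0$ and $B+l$ of $B$, loses a factor of two and yields only the bound $2d$; what saves the right constant is the observation that the density hypothesis $d\le n-2$ forces every point of the long middle interval $[l,M]$ into $B+A$, leaving only two short end-windows of omitted points which — because $M=hl\ge 2l$ for $h\ge 2$ — can be charged injectively to disjoint parts of the deficiency set of $B$. One should also dispatch the edge case $l=2n-3$ in the base step, and note that the full strength of $\gcd(A)=1$ is used only to identify the minimal progression through $A$ as $[0,l]$ (in fact $\gcd(A)=1$ is already forced by $\{0,l\}\subseteq A$ together with $d\le n-2$). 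Alternatively, one may simply quote Lev's structure theorem for iterated sumsets \cite{L} directly.
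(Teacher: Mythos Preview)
The paper does not supply its own proof of this theorem: it is quoted from the literature as ``a special case of a result of Lev \cite{L}, generalising an earlier result of Freiman \cite{F}'', and is used as a black box in Section~3. So there is no in-paper argument to compare against.

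That said, your argument is correct and self-contained. The reformulation in terms of the deficiency $d=l+1-n$ and the statement $P(h)$ is clean; the inductive step for $h\ge 2$ is valid, the key point being that $|E|\le d\le n-2$ forces every $x\in[l,M]$ into $B+A$ (else $x-A\subseteq E$ would give $n\le |E|$), and that $M=hl\ge 2l$ makes the two end-windows $[0,l-1]$ and $[M-l+1,M]$ disjoint so the injections into $E$ do not collide. Your separate treatment of $P(2)$ via the $3k-4$ theorem is necessary exactly because those windows overlap when $h=1$, and the application is sound: since $0,l\in A$ and $\gcd(A)=1$, any arithmetic progression containing $A$ has common difference dividing $\gcd(A)=1$, so the minimal one is $[0,l]$. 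Your closing parenthetical, that $\gcd(A)=1$ is in fact forced by $\{0,l\}\subseteq A$ and $d\le n-2$, is also correct (a common difference $g\ge 2$ would give $n\le l/g+1\le l/2+1$, contradicting $l\le 2n-3$).

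This is essentially the route taken in Lev's paper \cite{L}, so you have effectively reproduced the literature proof rather than found an alternative; your final sentence (``one may simply quote Lev's structure theorem \ldots\ directly'') is exactly what the present paper does.
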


\setcounter{equation}{0}
\section{Proof of Conjecture 1.1}

First some notation. Let $G$ be an abelian group and $A \subseteq G$. For
$g \in G$ we denote 
\be
A + g := \{a+g : a \in A \},
\ee
and for $h \in \mathbb{Z}$ we denote
\be
h \cdot A := \{ ha : a \in A \}.
\ee

\begin{lem}
Let $A \subseteq \mathbb{Z}_n$ and $u,v \in \mathbb{Z}$ such that
$(u,n) = 1$. Then $\rho_{n}(A) = \rho_{n}[(u \cdot A) + v]$.
\end{lem}

\begin{proof}
This is clear.
\end{proof}

\begin{lem}
Conjecture 1.1 holds for bases consisting of at most $3$ elements.
\end{lem}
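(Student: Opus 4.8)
The plan is to reduce the problem, via Lemma 3.1, to a normalized situation and then analyze the few possible shapes of a small basis directly. By Lemma 3.1 we may multiply $A$ by a unit and translate; so if $|A| \le 3$ and $A$ is a basis, after these operations we may assume $0 \in A$ and, writing $A = \{0, a\}$ or $A = \{0, a, b\}$ with appropriate representatives in $[0,n-1]$, that the relevant gcd conditions hold. The case $|A| = 1$ is trivial (then $A = \{u\}$ with $(u,n)=1$ and $\rho_n(A) = n-1$, consistent with $l=1$ only when... actually $n-1$ is within $c_k$ of $n/1$ for $k=1$). The case $|A| = 2$ is essentially the classical fact already quoted in the introduction: $\rho_n(\{a_1,a_2\}) = n-1$ when $(a_2-a_1,n)=1$, and more generally one computes $\rho_n$ exactly for a two-element set, finding it is always of the form (roughly) $n/d$ plus a bounded error where $d = (a_2 - a_1, n)$; since $\rho_n(A) \ge n/k$ forces $d \le k$ (up to lower-order terms), we land near $n/l$ for $l = d \in [1,k]$.

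The real content is the case $|A| = 3$. After normalization write $A = \{0, a, b\}$ with $0 < a < b < n$ and $\gcd(a,b,n) = 1$. The plan is to understand $hA$ concretely. Lifting to $\mathbb{Z}$, the set $hA$ is the image modulo $n$ of $\{ia + jb : i,j \ge 0,\ i+j \le h\}$, which is a triangular portion of the lattice generated by $a$ and $b$. One expects that $\rho_n(A)$ is governed by how this expanding triangle covers residues mod $n$, and that the threshold $h$ at which $hA = \mathbb{Z}_n$ is, up to an additive constant depending only on $k$, equal to $n/l$ for a suitable $l$ — concretely $l$ should be something like $\max(a', b', b'-a')$ or a related quantity built from $a$, $b$ and $n$, where $a', b'$ are suitable least-residue choices. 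I would identify the right combinatorial quantity by first doing the case $\gcd(a,n) = 1$ (so we may further rescale to assume $a = 1$, making $A = \{0,1,b\}$), where $hA \bmod n$ is an interval $[0,h]$ unioned with shifts, and the analysis becomes a clean counting of which intervals $[jb, jb+h-j]$ tile $\mathbb{Z}_n$; and then reducing the general case to this one by factoring out $d = \gcd(a,n)$ and handling the progression modulo the subgroup of order $n/d$.

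The main obstacle I anticipate is the bookkeeping in the three-element case: pinning down, uniformly in $n$, the exact integer $l \in [1,k]$ that $\rho_n(A)$ approximates, and proving the error is bounded by a constant $c_k$ independent of $n$. This requires a careful case split according to the gcd structure of $\{a, b, n\}$ and, within each case, an argument that the covering of $\mathbb{Z}_n$ by the triangular sumset happens "all at once" up to $O_k(1)$ steps rather than being spread over a range of order $n$. Here the quoted structure theorems are less directly useful than elementary interval-covering arguments, since with only three generators everything is explicit; the Freiman–Lev bound (Theorem 2.4) may help show the sumset cannot grow too slowly once lifted to an arithmetic progression in $\mathbb{Z}$, giving a matching lower bound on how fast $hA$ approaches all of $\mathbb{Z}_n$ and hence an upper bound on $\rho_n(A)$ of the form $n/l + O_k(1)$. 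Combining the easy lower bound $\rho_n(A) \ge n/l$-type inequality (which follows because $|hA| \le h|A|$ forces $hA \ne \mathbb{Z}_n$ for $h$ too small) with this upper bound closes the argument.
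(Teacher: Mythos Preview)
Your small-case treatment contains slips. If $|A|=1$ and $A$ is a basis then $hA$ is a singleton for every $h$, so $n=1$; your claim that $\rho_n(A)=n-1$ is wrong. For $|A|=2$, a two-element basis necessarily has $(a_2-a_1,n)=1$, so your parameter $d$ is always $1$ and the ``more general'' computation is spurious. These are easily repaired, but the three-element case has a real gap.

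For $|A|=3$ you correctly normalize to $A=\{0,1,t\}$, but then your plan---analyze directly how the intervals $[jt,\,jt+h-j]$ tile $\mathbb{Z}_n$ and read off the right $l$---is not a proof, and you yourself flag the identification of $l$ as ``the main obstacle''. The paper's key idea, which you are missing, is a pigeonhole (Dirichlet approximation) step: there exists $c\in[1,k-1]$ with $\lvert\!\lvert ct\rvert\!\rvert_n\le n/k$. Writing $s=\lvert\!\lvert ct\rvert\!\rvert_n$ one gets the clean upper bound $\rho_n(A)\le s+cn/s$, and a short convexity argument shows that if $\rho_n(A)>n/k+ck$ then $s\le ck$; this forces $t=(dn+e)/c$ with $c,d,e=O(k)$, and from that representation one reads off $l=\max\{c,e\}$ (or the analogous quantity after replacing $A$ by $1-A$ when $e<0$). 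Without this Diophantine reduction, a generic $t$ gives intervals $[jt,\,jt+h-j]$ whose overlap pattern in $\mathbb{Z}_n$ is governed by the continued-fraction expansion of $t/n$, and there is no uniform-in-$n$ covering threshold to point to.

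Your proposed fallback on Theorem~2.4 does not help here: for $A=\{0,1,t\}$ lifted to $\mathbb{Z}$ one has $|A|=3$ and interval length $l=t$, so the density hypothesis $2|A|-3\ge l$ forces $t\le 3$. The paper uses Theorem~2.4 only later, in the main argument, after Theorem~2.2 has guaranteed the needed density; Lemma~3.2 itself is proved by the pigeonhole argument above without invoking Theorem~2.4 at all.
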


\begin{proof}
Let $n \in \mathbb{N}$ and $A$ be a basis for $\mathbb{Z}_n$ such that $|A| 
\leq 3$. If $|A| = 1$ then $n=1$, so the Conjecture is vacuous. 
If $|A| = 2$ 
then $\rho_{n}(A) = n-1$, as already noted in the Introduction. Suppose 
$|A| = 3$. From Lemma 3.1 it is easy to deduce that, 
without loss of generality, one of the following cases arises :
\\
\\
(i) $A = \{0,1,t\}$ for some $t \in [2,n-1]$, 
\\
(ii) $A = \{0,a,b\}$ where $a \geq 2$, $a \mid n$ and $(a,b) = 1$.
\\
\\
In case (ii) it is easy to see that
\be
\max \left\{ \frac{n}{a} - 1, a-1 \right\} \leq \rho_{n} (A) \leq 
\left( \frac{n}{a} - 1 \right) + (a-1),
\ee
which in turn is easily seen to imply Conjecture 1.1. It remains to deal with 
case (i). In what follows we adopt the following notation : If
$x \in \mathbb{Z}$ and $n \in \mathbb{N}$ then $||x||_n$ denotes the 
numerically least residue of $x$ modulo $n$, that is, the unique integer
$x_0 \in (n/2,n/2]$ such that $x \equiv x_0 \; ({\hbox{mod $n$}})$. 
\\
\\
So fix $k,t \in \mathbb{N}_{> 1}$ and set $A = \{0,1,t\}$. Let $n \in
\mathbb{N}$, which we think of as being very large. 
We suppose that $\rho_{n}(A) > n/k$ and shall show that
Conjecture 1.1 holds. First of all, by a standard pigeonhole principle
argument, there is some integer $c \in [1,k-1]$ such that 
$||ct||_{n} \leq n/k$. 
Put $r := ||ct||_{n}$ and $s := |r|$. Clearly, the order of 
the basis $\{0,1,s\}$ is at most $s + n/s$ (if $s = 0$ this quantity becomes
infinite, which is consistent with our earlier notations). In terms
of $A$, this implies that
\be
\rho_{n}(A) \leq s + \frac{cn}{s}.
\ee
The function $f(s) = s + cn/s$ has a local minimum at $s = \sqrt{cn}$. Note 
also that $f(ck) = f(n/k) = n/k + ck$. It follows that, for $n \gg 0$, if 
$\rho_{n}(A) > n/k + ck$ then $s \leq ck$. In terms of $t$, the latter
implies that
\be
t = \frac{dn+e}{c},
\ee
for some integers $d \in [0,c)$, $e \in [-ck,ck]$. In this representation
of $t$, we may assume that $(d,c) = 1$. The important 
point is that each of $c,d,e$ is $O(k)$. First suppose $e \geq 0$. 
Clearly then, the number of terms from $A$ needed to represent every number from
$0$ through $n-1$ is at most $O(k)$ greater than the number of terms 
needed to represent every number from $0$ through $\lfloor n/c \rfloor$. 
But since $ct \equiv e \; ({\hbox{mod $n$}})$ it is easy to see in turn that
the latter number of terms is within $O(k)$ of $n/l$, where
$l = \max \{c,e\}$. Thus $| \rho_{n}(A) - n/l| = O(k)$, which implies
Conjecture 1.1.
\par If $e < 0$, then replace $A$ by $1-A = \{0,1,1-t\} \; 
({\hbox{mod $n$}})$ and argue as before. This completes the proof of the 
lemma.    
\end{proof}

We can now complete the proof of Conjecture 1.1. Fix $k \in \mathbb{N}$.
All constants $c_{i,k}$ appearing below depend on $k$ only. Let
$n$ be a positive integer which we think of as being very large. 
Let $A$ be a basis for $\mathbb{Z}_n$ such that $\rho_{n}(A) > n/k$. 
By Lemma 3.1 we may assume, without loss of generality, that $0 \in A$. This
is a convenient assumption, as it implies that $hA \subseteq (h+1)A$ for 
every $h$. 
From Theorem 2.1 it is easy to deduce the existence of positive constants
$c_{1,k}, c_{2,k}$, such that 
\be
|A| \leq c_{1,k}
\ee
and, for some integer $j \in [1,c_{2,k}]$ one must have 
\be
|2^{j+1} A| < 2.04 |2^{j} A|.
\ee
Set $h := 2^j$. For $n$ sufficiently large, we'll certainly have 
$|hA| < 10^{-9} n$ and
so we can apply Theorem 2.2. Let $H$ be the 
corresponding subgroup of $\mathbb{Z}_n$ and $\pi : \mathbb{Z}_n \rightarrow
\mathbb{Z}_n / H$ the natural projection. We can identify $H$ with
$\mathbb{Z}_m$ for some proper divisor $m$ of $n$, and then identify
$\mathbb{Z}_n / H$ with $\mathbb{Z}_{n/m}$. Let $B := h A$. Since
$A$ is a basis for $\mathbb{Z}_n$, then so is $B$ and hence $\pi(B)$ is a 
basis for $\mathbb{Z}_{n/m}$. This means that either case (i) or case (ii)
of Theorem 2.2 must apply. Moreover, since some coset of $H$ contains at 
least $\frac{2}{3} |H|$ elements from $B$, it follows that 
$m = |H| = O(|B|) = O(k)$. Thus 
\be
m \leq c_{3,k}, 
\ee
say. Since 
\be
\rho_{n/m}(\pi(A)) \leq \rho_{n}(A) \leq \rho_{n/m}(\pi(A)) + m,
\ee
this together with (3.7) and (3.8) imply that
\be
|\rho_{n}(A) - h \rho_{n/m}(\pi(B))| \leq c_{4,k}.
\ee
To prove Conjecture 1.1, it thus suffices to show that 
\be
| \rho_{n/m}(\pi(B)) - n/q | \leq c_{5,k}, \;\;\; {\hbox{for some
multiple $q$ of $h$}}.
\ee
Let $s$ be the number of cosets of $H$ met by $B$ and $s^{\prime}$ the
number met by $A$. 
\\
\\
{\sc Case 1} : $s = 3$. 
\\
\\
Then $s^{\prime} \leq 3$. We don't need (3.11) in this case and can
instead deduce Conjecture 1.1 directly from (3.9) and Lemma
3.2.
\\
\\
{\sc Case 2} : $s \neq 3$. 
\\
\\
Then Case (i) of Theorem 2.2 must apply. Let $l$ be the minimum length of an 
arithmetic progression in $\mathbb{Z}_{n/m}$ containing $\pi(B)$. Note that
$l \leq c_{6,k}$, by (2.1). By Lemma 3.1, there is no loss of
generality in assuming that $\pi(B)$ is contained inside an interval of length 
$l-1$. Since $\pi(A) \subseteq \pi(B)$ and $l = O(k)$ we can now also 
see that $l-1$ is a multiple of $h$, provided $n \gg 0$. Thus it suffices
to prove that
\be
\left| \rho_{n/m}(\pi(B)) - \frac{n}{l-1} \right| \leq c_{7,k}.
\ee
It is here that we use Theorem 2.4. Indeed (3.12) is easily seen to follow
from that theorem provided that $2s-3 \geq l-1$. But this inequality
is in turn easily checked to result from (2.1) (as applied to $B$), (3.7)
and the fact that $|B| \leq s|H|$. 
\par Thus the proof of Conjecture 1.1 is complete.     

\setcounter{equation}{0}
\section{Concluding remarks}

Explicit values for each of the constants $c_{i,k}$,  $i = 1,...,7$, can 
easily be extracted from the argument given above. 
Similarly, one can extract explicit bounds for all the $O(k)$ terms in
the proof of Lemma 3.2. All of this will in
turn yield explicit constants $c_k$ in Conjecture 1.1. We refrain from 
carrying out this messy procedure, however, since the more interesting
question is what the optimal values are for the $c_k$. Note that
$c_k \geq (k-2) + \frac{1}{k}$, which can be seen by considering the basis
$\{0,1,k\}$ for $\mathbb{Z}_n$, when $n \equiv -1 \; ({\hbox{mod $k$}})$. 

\section*{Acknowledgement}
I thank Renling Jin for very helpful discussions.

\ \\


\begin{thebibliography}{NOORS} 

\bibitem[DF]{DF}
J.-M. Deshouillers and G. A. Freiman, \emph{A step beyond Kneser's theorem for abelian finite groups}, Proc. London Math. Soc. (3) \textbf{86} (2003), no. 1, 
1--28.

\bibitem[DH]{DH}
P. J. Dukes and S. Herke, 
\emph{The structure of the exponent set for finite cyclic groups}.
Preprint available at \texttt{http://arxiv.org/abs/0810.0881}  

\bibitem[F]{F}
G. A. Freiman, \emph{Foundations of a Structural Theory of Set Addition}
(Russian), Kazan. Gos. Ped. Inst., Kazan (1966) ; also Translations of Math. 
Monographs \textbf{37}, AMS Providence, RI (1973). 
 

\bibitem[KL]{KL}
B. Klopsch and V. F. Lev, \emph{Generating abelian groups by addition only},
Forum Math. \textbf{21} (2009), 23--41.


\bibitem[L]{L}
V. F. Lev, \emph{Structure theorem for multiple addition and the Frobenius
problem}, J. Number Theory \textbf{58} (1996), 79--88.

\bibitem[N]{N}
M. B. Nathanson, \emph{Additive Number Theory : Inverse Problems and the
Geometry of Sumsets}, Springer, New York (1996).

\end{thebibliography}
\end{document}